\newtheorem{theo}{Theorem}[section]
\newtheorem{lem}[theo]{Lemma}
\newtheorem{prop}[theo]{Proposition}
\theoremstyle{definition}
\newtheorem{defi}{Definition}[section]
\DeclareMathOperator{\TSPP}{TSPP}
\DeclareMathOperator{\sgn}{sgn}
\DeclareMathOperator{\diag}{diag}
\DeclareMathOperator{\inv}{inv}
\DeclareMathOperator{\asym}{\mathbf{ASym}}
\newcommand{\A}{\mathcal{A}}
\numberwithin{equation}{section}
\title[Alternating sign matrices and totally symmetric plane partitions]{Alternating sign matrices and totally symmetric plane partitions}
\author[F. Aigner]{Florian Aigner}
\address{Florian Aigner, LaCIM, Universit\'{e} du Qu\'{e}bec \`{a} Montr\'{e}al, Canada}
\email{florian.aigner@univie.ac.at}
\urladdr{https://homepage.univie.ac.at/florian.aigner/}
\author[I. Fischer]{Ilse Fischer}
\address{Fakult\"at f\"ur Mathematik, Universit\"at Wien, Austria}
\urladdr{https://www.mat.univie.ac.at/$\sim$ifischer/}
\author[M. Konvalinka]{Matja\v{z} Konvalinka}
\address{Fakulteta za matematiko in fiziko, Univerza v Ljubljana, \& In\v{s}titut za matematiko, fiziko in mehaniko, Ljubljana, Slovenija}
\urladdr{http://www.fmf.uni-lj.si/$\sim$konvalinka/}
\author[P. Nadeau]{Philippe Nadeau}
\address{Univ Lyon, CNRS, Universit\'e Claude Bernard Lyon 1, UMR 5208, Institut Camille Jordan, France}
\urladdr{http://math.univ-lyon1.fr/$\sim$nadeau/}
\author[V. Tewari]{Vasu Tewari}
\address{Department of Mathematics, University of Pennsylvania, USA}
\urladdr{https://www.math.upenn.edu/$\sim$vvtewari/}
\thanks{The first author acknowledges support from the Austrian Science Foundation FWF: J 4387 and SFB grant F50 and by the project ‘‘Austria/France Scientific \& Technological Cooperation’’ (BMWFW Project No. FR 10/2018 and PHC Amadeus 2018 Project No. 39444WJ), the second author acknowledges support from the Austrian Science Foundation FWF, SFB grant F50, and the third author acknowledges the financial support from the Slovenian Research Agency (research core funding No. P1-0294).}
\keywords{alternating sign matrices, totally symmetric plane partitions, Schur polynomials, Catalan numbers}
\begin{document}

\begin{abstract}
We study the Schur polynomial expansion of a family of symmetric polynomials related to the refined enumeration of alternating sign matrices with respect to their inversion number, complementary inversion number and the position of the unique $1$ in the top row. 
We prove that the expansion can be expressed as a sum over totally symmetric plane partitions and we are also able to determine the coefficients.
This establishes a new connection between alternating sign matrices and a class of plane partitions, thereby complementing the fact that  
alternating sign matrices are equinumerous with totally symmetric self-complementary plane partitions as well as with descending plane partitions. As a by-product we obtain an interesting map from totally symmetric plane partitions to Dyck paths. The proof is based on a new, quite general antisymmetrizer-to-determinant formula.
\end{abstract}

\maketitle

\section{Introduction}

\emph{Plane partitions} were first studied by MacMahon \cite{MacMahon97} at the end of the 19th century, however found broader interest in the combinatorial community starting in the second half of the last century. \emph{Alternating sign matrices} (ASMs) on the other hand were introduced by Robbins and Rumsey \cite{RobbinsRumsey86} in the early 1980s. Together with Mills \cite{MillsRobbinsRumsey82}, they conjectured that the number of $n \times n$ ASMs is given by $\prod_{i=0}^{n-1} \frac{(3i+1)!}{(n+i)!}$. Stanley then pointed out to them that these numbers had appeared before in the work of Andrews \cite{Andrews79} as the enumeration formula for a certain class of plane partitions, called \emph{descending plane partitions}. Soon after that Mills, Robbins and Rumsey \cite{MillsRobbinsRumsey86} observed (conjecturally) that this formula also counts another class of plane partitions, namely \emph{totally symmetric self-complementary plane partitions}. Although these conjectures have all been proved since then, see among others \cite{Andrews94,Zeilberger96}, it is mostly agreed that there is no good combinatorial understanding of this relation between ASMs and certain classes of plane partitions since we lack combinatorial proofs of these results. The purpose of this paper is to relate ASMs to yet another class of plane partitions, namely \emph{totally symmetric plane partitions} (TSPPs), in a new way. This relation is via a certain Schur polynomial expansion. Other known relations between ASMs and TSPPs are via posets, see \cite[Section 8]{Striker11}, and the fact, that the number of symmetric plane partitions inside an $(n,n,n)$-box is the product of the number of TSPPs inside an $(n,n,n)$-box and the number of ASMs of size $n$, see \cite{Fischer05}, although it can be argued that the latter relation is in a sense more between TSPPs and TSSCPPs.

More concretely, the following symmetric functions are studied in this paper
\[
\A_n(u,v;\mathbf{x}):= \frac{\asym_{x_1,\ldots,x_n} 
\left[ \prod\limits_{i=1}^n x_i^{i-1} \prod\limits_{1 \le i < j \le n} (v+ (1-u-v) x_i + u x_i x_j) \right]}{\prod\limits_{1 \le i < j \le n} (x_j - x_i)}, 
\]
where $\asym$ denotes the antisymmetrizer, i.e.,
$
\asym_{x_1,\ldots,x_n}  f(\mathbf{x}) = \sum\limits_{\sigma \in {\mathcal S}_n} \sgn(\sigma) \allowbreak \cdot f(x_{\sigma(1)},\ldots,x_{\sigma(n)})
$
and $\mathbf{x}=(x_1,\ldots,x_n)$.
These symmetric functions have arisen in the special case $t^{\binom{n}{2}}\A_n\left(\frac{1}{t},\frac{1}{t};\mathbf{x}\right)$ in the work of Fischer and Riegler \cite[Corollary 10.2]{FischerRiegler15}, where the following connection to 
ASMs was proved.
\begin{theo} \label{thm:FiRie}
The number of $n \times n$ ASMs that have the unique $1$ in the top row in column $i$ and with $m$ occurrences of $-1$ is the 
coefficient of $z^{i-1} t^m$ in $t^{\binom{n}{2}}\A_n\left(\frac{1}{t},\frac{1}{t};z,1,\ldots,1\right)$.
\end{theo}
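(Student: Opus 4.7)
My approach would follow the classical route for refined ASM enumerations: the bijection to monotone triangles combined with Fischer's operator method. Under the standard bijection between $n \times n$ ASMs and monotone triangles of order $n$ with bottom row $(1, 2, \ldots, n)$, the unique $1$ in the top row of the ASM in column $i$ corresponds to the sole entry of the top row of the triangle being $i$, while the number of $-1$ entries in the ASM counts the entries of the triangle that are strictly between their two lower neighbours. I would first verify this correspondence of statistics explicitly, since the whole argument depends on translating the two refined ASM parameters into data about the triangle.

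Next, I would aim to establish a generating-function identity for the $(z,t)$-weighted enumeration of monotone triangles. Fischer's operator formula computes the number $\alpha(n; k_1, \ldots, k_n)$ of monotone triangles with prescribed bottom row as a product of shift operators applied to a Vandermonde-like factor; a $t$-refinement marking each ``strict'' entry by $t$ replaces each pairwise operator factor by an appropriate polynomial in shift operators and $t$. Summing this $t$-weighted count over admissible top rows with weight $z^{i-1}$ and specializing the bottom row to $(1, 2, \ldots, n)$ should yield an expression which, via the standard dictionary replacing shift operators $E_{k_i}$ by formal variables $x_i$ and applying the identity
$$\frac{\asym_{\mathbf{x}}\bigl[\prod_i x_i^{i-1}\, g(\mathbf{x})\bigr]}{\prod_{1\le i<j\le n}(x_j - x_i)}\bigg|_{x_1 = z,\, x_2 = \cdots = x_n = 1},$$
recovers the right-hand side of the theorem.

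The crucial step is matching the refined operator product, factor by factor, with the trinomials $(1 + (t-2)x_i + x_i x_j)$ appearing in $\A_n$ after setting $u = v = 1/t$ and clearing denominators; a quick calculation confirms $v + (1-u-v)x_i + ux_i x_j = t^{-1}(1 + (t-2)x_i + x_i x_j)$, and the $\binom{n}{2}$ factors absorb the prefactor $t^{\binom{n}{2}}$. Each pair $i < j$ corresponds to a local two-row configuration in the monotone triangle, and the three monomials $1$, $(t-2)x_i$, $x_i x_j$ should correspond to the three possible local patterns, with the coefficient $t-2$ reflecting the trade-off between forced and free positions. I expect this local matching to be the main obstacle: the operator factors do not commute straightforwardly with the antisymmetrization, and the trinomials interact nonlinearly. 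Proving a general antisymmetrizer-to-determinant identity — as is in fact hinted at in the abstract — might be required in order to reduce this to a manageable determinantal identity, at which point extracting the coefficient of $z^{i-1}t^m$ yields the stated count.
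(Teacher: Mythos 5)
The paper does not actually prove Theorem~\ref{thm:FiRie}: it imports it from Fischer and Riegler \cite[Corollary~10.2]{FischerRiegler15}, so there is no internal argument to compare against. Your outline follows precisely the route of that reference --- the bijection between $n\times n$ ASMs and monotone triangles with bottom row $(1,\ldots,n)$, the translation of the two statistics (the column of the top-row $1$ becomes the apex entry, each $-1$ becomes an entry strictly between its lower neighbours), Fischer's operator formula with a $t$-refinement, and the dictionary turning shift operators into the variables $x_i$. Your specialization check is also correct: at $u=v=\tfrac{1}{t}$ one has $v+(1-u-v)x_i+ux_ix_j=\tfrac{1}{t}\bigl(1+(t-2)x_i+x_ix_j\bigr)$, so the prefactor $t^{\binom{n}{2}}$ exactly clears the $\binom{n}{2}$ denominators.

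That said, what you have written is a plan rather than a proof, and the two load-bearing steps are only described, not established. First, you do not state or verify the $t$-refined operator identity: which polynomial in the shift operators replaces each pairwise factor so that every ``strict'' entry acquires a weight $t$, and why the resulting operator product applied to the Vandermonde-type polynomial and evaluated at $(1,\ldots,n)$ yields the claimed antisymmetrizer --- you yourself flag the matching of the three monomials $1$, $(t-2)x_i$, $x_ix_j$ with local two-row patterns as ``the main obstacle'' and leave it open. Second, the refinement by the apex entry needs its own argument: the operator formula naturally prescribes the \emph{bottom} row and counts all completions, so extracting the generating function in $z$ for the top entry requires an additional step (a trapezoid count, or running the recursion from a prescribed top entry downward), which you gloss over with ``summing over admissible top rows.'' Note also that Lemma~\ref{general} of the paper, which you suggest might help, is used there only to derive the determinantal form of $\A_n$ for the proof of Theorem~\ref{thm: main result}; it plays no role in establishing Theorem~\ref{thm:FiRie}. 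The honest course here is either to carry out steps (i) and (ii) in full, essentially reproving \cite[Corollary~10.2]{FischerRiegler15}, or to cite that result, as the paper does.
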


As remarked in \cite[Remark~2.1]{Fischer18}, the above result can be generalized as follows\footnote{It is a generalization since, for a given $n \times n$ ASM, the number of $-1$'s and the two inversion numbers sum to $\binom{n}{2}$.}.
\begin{theo} \label{thm: Fi18}
The number of $n \times n$ ASMs that have the unique $1$ in the top row in column $i$ and with inversion number $a$ and complementary inversion number $b$ is the coefficient of $u^a v^b z^{i-1}$ in $\A_n(u,v;z,1,\ldots,1)$.
\end{theo}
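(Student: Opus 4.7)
The plan is to lift the Fischer--Riegler proof of Theorem~\ref{thm:FiRie} to the two-parameter setting, keeping $u$ and $v$ symbolic throughout. Observe first that setting $u=v=1/t$ in a single factor gives
\[
v+(1-u-v)x_i+u x_i x_j = \tfrac{1}{t}\bigl(1+(t-2)x_i+x_i x_j\bigr),
\]
so $t^{\binom{n}{2}}\A_n(1/t,1/t;\mathbf{x})$ coincides with the polynomial treated in \cite{FischerRiegler15}. Since $\inv(A)+\inv'(A)+\#_{-1}(A)=\binom{n}{2}$ for every ASM $A$, Theorem~\ref{thm:FiRie} already determines the sum $\inv(A)+\inv'(A)$; what is genuinely new in Theorem~\ref{thm: Fi18} is the refinement to the individual pair.

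The key algebraic step I would carry out is the rewriting
\[
v+(1-u-v)x_i+u x_i x_j = v(1-x_i)+x_i\bigl(1-u(1-x_j)\bigr),
\]
which exhibits three ``local states'' on each pair $i<j$ carrying weights $v$, $1$ and $-u$. Expanding the antisymmetrized product along these local choices, then performing the signed summation over $\mathcal{S}_n$ and the division by the Vandermonde, should, exactly as in the Fischer--Riegler argument, reduce the expression to a weighted sum over combinatorial objects in bijection with ASMs (for instance monotone triangles or six-vertex configurations). Specializing $x_2=\cdots=x_n=1$ then freezes all but the first variable, so only $z=x_1$ survives to record the position of the unique $1$ in the top row.

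The main obstacle is to prove that $u$ tracks $\inv(A)$ and $v$ tracks $\inv'(A)$ individually, not merely that $u^\alpha v^\beta$ appears with $\alpha+\beta$ correct. I would resolve this by combining the total-degree information coming from Theorem~\ref{thm:FiRie} with a symmetry argument: the involution on ASMs reversing the order of rows swaps $\inv$ and $\inv'$ while preserving (or reversing, in a controlled way) the top-row position, and it should correspond, up to a monomial prefactor, to a substitution $(u,v;x_1,\ldots,x_n)\mapsto(v,u;x_1^{-1},\ldots,x_n^{-1})$ of $\A_n(u,v;\mathbf{x})$, which can subsequently be specialized at $x_i=1$ for $i\geq 2$. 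The correct total degree together with this $u\leftrightarrow v$ symmetry should force the individual identification.

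An alternative, and in my view more mechanical, route is by induction on $n$. With $u$ and $v$ carried as formal parameters, the operator formulation underlying the Fischer--Riegler proof yields a recursion for $\A_n(u,v;z,1,\ldots,1)$ that should mirror a standard row-removal recursion for the $(\inv,\inv')$-refined top-row generating polynomial of ASMs; the induction step then amounts to checking that the two recursions agree, with the base case $n=1$ being trivial. I expect that essentially every step of the original proof is weight-preserving once $u$ and $v$ are kept symbolic, so that the only real work beyond \cite{FischerRiegler15} is the bookkeeping needed to separate the two statistics.
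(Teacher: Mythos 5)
First, a point of reference: the paper does not prove Theorem~\ref{thm: Fi18} at all --- it is quoted from \cite[Remark~2.1]{Fischer18}, where it is obtained from the operator formula for monotone triangles with the three summands of each factor $v+(1-u-v)x_i+ux_ix_j$ matched from the outset to the local configurations that record $\inv'$, $\mathcal{N}$ and $\inv$ respectively. So the refinement to the individual pair $(\inv,\inv')$ is built into that argument directly; it is not deduced after the fact from the $u=v=1/t$ specialization. Your proposal instead tries to start from Theorem~\ref{thm:FiRie} and recover the refinement by other means, and this is where it breaks.

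The genuine gap is in your claimed resolution of what you correctly identify as ``the main obstacle.'' Theorem~\ref{thm:FiRie} determines, for each column $i$, only the distribution of $\mathcal{N}(A)=\binom{n}{2}-\inv(A)-\inv'(A)$, i.e.\ of the \emph{sum} $\inv+\inv'$. Adding a $u\leftrightarrow v$ symmetry cannot recover the joint distribution of $(\inv,\inv')$ from the distribution of the sum: a multiset of pairs with prescribed sums that is invariant under swapping coordinates is not determined by its cardinality (for instance $\{(0,2),(2,0)\}$ and $\{(1,1),(1,1)\}$ both have two elements, sum $2$, and are swap-invariant). Moreover, the involutions that exchange $\inv$ and $\inv'$ (row or column reversal) do not fix the position of the unique $1$ in the top row --- row reversal replaces the top row entirely, and column reversal sends column $i$ to column $n+1-i$ --- so the symmetry is not even available ``for fixed $i$.'' The remaining steps (the expansion along the local states $v(1-x_i)$, $x_i$, $-ux_i(1-x_j)$, and the alternative induction on $n$) are stated only at the level of ``should'' and ``I expect''; the actual content of the theorem is precisely the weight-preserving correspondence between these local choices and the statistics of the ASM, which is the part carried out in \cite{Fischer18} and left unexecuted here.
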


The main result of this paper is the following Schur polynomial expansion of these functions. It was conjectured independently by the first author together with Fran\c{c}ois Bergeron and the last three authors.  All notions are explained in Section~\ref{sect:notion}.

\begin{theo}
\label{thm: main result}
For all positive integers $n$, we have
\begin{equation}
\label{eq: A-poly as sum of TSPPs}
\A_n(u,v;\mathbf{x}) = \sum_{T \in \TSPP_{n-1}}\omega_{\pi(T)}(u,v)s_{\pi(T)}(\mathbf{x}),
\end{equation}
where $\pi(T)=(a_1,\ldots,a_l|b_1+1,\ldots,b_l+1)$ is the modified balanced partition associated to the TSPP $T$ in Frobenius notation and 
$\omega_{\pi(T)}(u,v)=u^{\sum_{i=1}^l (a_i+1)} (1-u-v)^{\sum_{i=1}^l(b_i-a_i)}v^{\binom{n}{2}-\sum_{i=1}^lb_i}$.
\end{theo}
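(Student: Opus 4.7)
The plan is to reduce the antisymmetrizer defining $\A_n(u,v;\mathbf{x})$ to a determinant using the product structure of its numerator, and then to match the resulting combinatorial expansion with a sum over TSPPs.

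First, I would expand the product $\prod_{1 \le i < j \le n}(v + (1-u-v)x_i + u x_i x_j)$ monomial by monomial: each edge $\{i,j\}$ contributes one of the three summands, producing a three-letter labeling $L$ of the edges of the complete graph on $\{1,\ldots,n\}$ with coefficient $v^{\#V}(1-u-v)^{\#M}u^{\#U}$ and with $x_i$-exponent equal to $i-1$ plus contributions coming from the edges incident to vertex $i$ (the term $(1-u-v)x_i$ contributes only at the smaller endpoint of its edge, while $u x_i x_j$ contributes at both). Using the standard identity $\asym(x^\mu)/\prod_{i<j}(x_j - x_i) = \sgn(\sigma)\, s_\lambda(\mathbf{x})$ when $\mu = \sigma(\lambda + \delta)$ and zero when $\mu$ has a repeated entry, only labelings with pairwise distinct exponents contribute; grouping by the resulting partition $\lambda$ reduces the Schur expansion to a signed sum over surviving labelings.

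Second, I would invoke the antisymmetrizer-to-determinant formula promised in the abstract to rewrite this signed sum as a determinant. Since each factor of the product is bilinear in its two variables, with $x_j$-dependence confined to the linear coefficient of $x_i$, there should be enough structure to collapse the antisymmetrizer into the determinant of a local matrix whose $(i,j)$-entry encodes a generating function for the exponent contributions at vertex $i$ targeting exponent $\lambda_j + n - j$. I would then expand this determinant combinatorially, either through the Lindstr\"om--Gessel--Viennot lemma applied to an appropriate family of lattice paths or through direct row and column manipulations.

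The main obstacle is the final identification with TSPPs: these form perhaps the most combinatorially rigid of the classical symmetry classes of plane partitions --- Stembridge's enumeration formula admits no known bijective proof --- so extracting them from a determinantal expansion in a statistic-preserving manner will require a careful encoding. The ``$+1$'' shift in the modified Frobenius notation $(a_1,\ldots,a_l \mid b_1+1,\ldots,b_l+1)$ is suggestive of a one-step offset in path endpoints reflecting the asymmetry between the summands $(1-u-v)x_i$ and $u x_i x_j$, while the specific form of $\omega_{\pi(T)}(u,v)$ should correspond to a canonical $\{V,M,U\}$-labeling of each TSPP under the bijection, with $U$-edges counted by the arm parameters $a_i+1$, $M$-edges by the differences $b_i - a_i$, and $V$-edges by the complementary count $\binom{n}{2} - \sum b_i$.
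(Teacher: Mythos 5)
Your outline tracks the paper's first half: rewriting the antisymmetrizer as a determinant is indeed the opening move (the paper proves a general lemma, $\det_{1\le i,j\le n}\bigl(f(X_i)^j-g(X_i)^j\bigr)=\asym_{X_1,\dots,X_n}\bigl[\prod_{1\le i\le j\le n}(f(X_j)-g(X_i))\bigr]$, by induction via a linear system and Cramer's rule), and your ``local matrix whose $(i,j)$-entry encodes a generating function for the exponent contributions'' matches the paper's expansion $\sum_\lambda s_\lambda(\mathbf{x})\det_{1\le i,j\le n}(a_{j,\lambda_i+j-i})$ obtained from column-linearity of the determinant. But you neither state nor prove the lemma, and you skip the nontrivial massaging of $\A_n$ into the required form $\prod_{i\le j}(f(x_j)-g(x_i))$: one must divide each factor by $x_i$, absorb the prefactor $\prod_i x_i^{i-1}$, and insert the symmetric diagonal terms $i=j$ with a compensating product. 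The initial edge-labeling expansion over the complete graph is a detour that the determinant route renders unnecessary.

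The genuine gap is the step you yourself flag as ``the main obstacle'': identifying the Schur coefficients with the TSPP-weighted sum. Your proposed route --- a statistic-preserving bijection realized by a canonical $\{V,M,U\}$-labeling of each TSPP --- is not carried out, and as you note it would amount to a bijective handle on TSPPs that is not known to exist; the suggestive remarks about arm parameters do not constitute an argument. The paper avoids this entirely. It first counts the TSPPs $T$ with a fixed $\pi(T)=(a_1,\dots,a_l|b_1,\dots,b_l)$ via Lindstr\"om--Gessel--Viennot as $\det_{1\le i,j\le l}\bigl(\binom{b_j-1}{a_i}\bigr)$, so the coefficient $d_\lambda$ on the right-hand side is this determinant times the monomial weight; Pascal's rule $\binom{b_j-1}{a_i}=\binom{b_j-2}{a_i}+\binom{b_j-2}{a_i-1}$ and multilinearity then give a recursion expressing $d_\lambda$ through coefficients for $n-1$. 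On the other side, expanding the coefficient determinant along its first column (which forces $\lambda_{l+1}=l$, i.e.\ $\lambda$ modified balanced, and kills all other $\lambda$) and rewriting each remaining row as a linear combination of lower-order entries yields the identical recursion for $c_\lambda$; induction on $n$ closes the argument. Without establishing this common recursion (or supplying a genuinely new bijection), your plan does not reach the conclusion.
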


For $n=3$ the right hand side of \eqref{eq: A-poly as sum of TSPPs} is a sum over all totally symmetric plane partitions inside a $(2,2,2)$-box, which are shown below. Theorem \ref{thm: main result} then states
\begin{multline*}
\A_n(u,v;x_1,x_2,x_3)= v^3 + u v^2 s_{1,1}(x_1,x_2,x_3)
\\+u(1-u-v)v s_{1,1,1}(x_1,x_2,x_3) + u^2 v s_{2,1,1}(x_1,x_2,x_3) + u^3 s_{2,2,2}(x_1,x_2,x_3).
\end{multline*}

\begin{center}
	\begin{tikzpicture}
	\begin{scope}[scale=0.42]
		\node at (-1.5,0) {$T$:};
		\node at (2,0) {$\emptyset$};
		\begin{scope}[xshift=6cm]
			\PlanePartition{{1,0},{0,0}}
		\end{scope}
		\begin{scope}[xshift=12cm]
			\PlanePartition{{2,1},{1,0}}
		\end{scope}
		\begin{scope}[xshift=18cm]
			\PlanePartition{{2,2},{2,1}}
		\end{scope}
		\begin{scope}[xshift=24cm]
			\PlanePartition{{2,2},{2,2}}
		\end{scope}
	\end{scope}
	\begin{scope}[scale=0.42, yshift=-7.5cm,]
		\node at (-1.5,2) {$\pi(T)$:};
		\node at (2,2) {$\emptyset$};
		\begin{scope}[xshift=4.5cm]
			\Tableaux{{1,1}}
		\end{scope}
		\begin{scope}[xshift=10.5cm]
			\Tableaux{{1,1,1}}
		\end{scope}
		\begin{scope}[xshift=16cm]
			\Tableaux{{2,1,1}}
		\end{scope}
		\begin{scope}[xshift=22cm]
			\Tableaux{{2,2,2}}
		\end{scope}
	\end{scope}
	\begin{scope}[scale=0.42, yshift=-9cm]
		\node at (-1.5,0) {$\omega_{\pi(T)}(u,v))$:};
		\node at (2,0) {$v^3$};
		\node at (6,0) {$u v^2$};
		\node at (12,0) {$u(1-u-v)v$};
		\node at (18,0) {$u^2 v$};
		\node at (24,0) {$u^3$};
	\end{scope}
	\end{tikzpicture}
\end{center}

The structure of the paper is as follows. In Section~\ref{sect:notion}, we provide all definitions and count TSPPs with respect to their diagonal. In Section~\ref{sect:lem}, we provide a lemma that allows us to express $\A_n(u,v;\mathbf{x})$ using a determinant. In Section~\ref{sect:proof}, we use this other expression for $\A_n(u,v;\mathbf{x})$ to prove Theorem~\ref{thm: main result}.

\section{Modified balanced partitions, TSPPs and ASMs} \label{sect:notion}

The first important objects are modified balanced partitions. They are a variation of objects that appear in \cite[Ex. 6.19(bb) p. 223]{Stanley99}.

\begin{defi}
Let $\lambda$ be a partition $\lambda=(\lambda_1,\ldots, \lambda_{n})$, where we allow zero parts.
We call $\lambda$ a \emph{modified balanced} of size $n$ if
  $\lambda_1 \leq n-1$ and $\lambda_i < \lambda_i^\prime$ whenever $\lambda_i \geq i$, where $\lambda^\prime=(\lambda^\prime_1,\ldots, \lambda^\prime_{m})$ denotes the conjugate partition.
\end{defi}
The modified balanced partitions of $n=3$ are displayed next (using French notation), together with their Frobenius notation which we recall next.

\begin{center}
\begin{tikzpicture}
\begin{scope}[scale=0.5]
\node at (1.5,1) {$\emptyset$};
\node at (1.5,-1) {$\emptyset$};
\node at (1.5,-2.5) {$(|)$};
\begin{scope}[xshift=5cm]
\Tableaux{{1,1}}
\node at (1.5,-1) {$(1,1)$};
\node at (1.5,-2.5) {$(0|1)$};
\end{scope}
\begin{scope}[xshift=10cm]
\Tableaux{{1,1,1}}
\node at (1.5,-1) {$(1,1,1)$};
\node at (1.5,-2.5) {$(0|2)$};
\end{scope}
\begin{scope}[xshift=15cm]
\Tableaux{{2,1,1}}
\node at (2,-1) {$(2,1,1)$};
\node at (2,-2.5) {$(1|2)$};
\end{scope}
\begin{scope}[xshift=20cm]
\Tableaux{{2,2,2}}
\node at (2,-1) {$(2,2,2)$};
\node at (2,-2.5) {$(1,0|2,1)$};
\end{scope}
\end{scope}
\end{tikzpicture}
\end{center}

Let $\lambda$ be a partition. Unless otherwise specified, we denote by $l$ the side length of the Durfee square of $\lambda$, which is defined as the largest square that is contained in the Ferrers diagram, i.e., $l=\max_{i}\{\lambda_i \geq i\}$. The \emph{Frobenius notation} of $\lambda$ is then $(\lambda_1-1,\lambda_2-2,\ldots,\lambda_l-l|\lambda_1^\prime-1,\lambda_2^\prime-2,\ldots,\lambda_l^\prime-l)$. 
Using the Frobenius notation, a partition $\lambda=(a_1,\ldots,a_l|b_1,\ldots b_l)$ is a modified balanced partition if $a_i < b_i$ for $1 \leq i \leq l$.

Modified balanced partitions of size $n$ are enumerated by the $n$-th Catalan number $C_n=\frac{1}{n+1}\binom{2n}{n}$ which can be seen easily by the following bijection between modified balanced partitions and Dyck paths (represented by north and east steps)
\[
(a_1,\ldots,a_l| b_1,\ldots, b_l) \mapsto N^{b_l}E^{a_l+1}N^{b_{l-1}-b_l}E^{a_{l-1}-a_l} \cdots 
N^{b_{1}-b_2}E^{a_{1}-a_2}N^{n-b_1}E^{n-a_1-1},
\]
for non-zero partitions and $(|) \mapsto N^n E^n$. For an example see Figure \ref{fig: mod bal partitions and Dyck paths}.\\

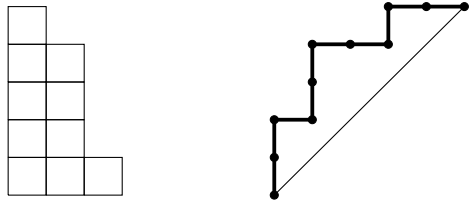
\begin{figure}[h]
\begin{center}
\begin{tikzpicture}
\begin{scope}[scale=0.5]
	\Tableaux{{3,2,2,2,1}}
	\begin{scope}[xshift=8cm, yshift=1cm]
		\draw[line width=0.5 mm] (0,0) -- (0,2) -- (1,2) -- (1,4) -- (3,4) -- (3,5) -- (5,5);
		\draw (0,0) -- (5,5);
		\filldraw (0,0) circle (3pt);
		\filldraw (0,1) circle (3pt);
		\filldraw (0,2) circle (3pt);
		\filldraw (1,2) circle (3pt);
		\filldraw (1,3) circle (3pt);
		\filldraw (1,4) circle (3pt);
		\filldraw (2,4) circle (3pt);
		\filldraw (3,4) circle (3pt);
		\filldraw (3,5) circle (3pt);
		\filldraw (4,5) circle (3pt);
		\filldraw (5,5) circle (3pt);
	\end{scope}
\end{scope}
\end{tikzpicture}
\end{center}
\caption{\label{fig: mod bal partitions and Dyck paths} The Ferrers diagram of the modified balanced partition $\lambda=(3,2,2,2,1)=(2,0|4,2)$ of size $5$ and its corresponding Dyck path.}
\end{figure}

Plane partitions are other combinatorial objects that are necessary in our study.

\begin{defi}
A \emph{plane partition} $\pi=(\pi_{i,j})_{1 \leq i,j \leq n}$ inside an $(n,n,n)$-box is an array of non-negative integers less than or equal to $n$ such that the rows and columns are weakly decreasing, i.e., $\pi_{i,j} \geq \pi_{i+1,j}$ and $\pi_{i,j} \geq \pi_{i,j+1}$.
\end{defi}
We can represent a plane partition $\pi$ graphically by replacing the $(i,j)$-th entry by a stack of $\pi_{i,j}$ unit cubes, see Figure \ref{fig: plane partition} for an example. From this point of view,  a \emph{plane partition} $\pi$ inside an $(n,n,n)$-box is a subset of $\{1, \ldots, n \}^3$ such that if $(i,j,k)$ is an element of $\pi$ then every $(i',j',k')$ with $i' \leq i,j' \leq j, k' \leq k$ is also an element of $\pi$. 

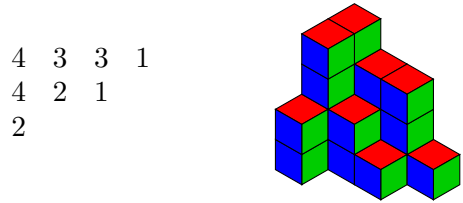
\begin{figure}[h]
	\centering
	\begin{tikzpicture}
		\begin{scope}
			\node at (0,0) {$\begin{array}{cccc}
				4 & 3 & 3 & 1\\
				4 & 2 & 1\\
				2
				\end{array}$};
		\end{scope}
		\begin{scope}[scale=0.4, xshift=9cm, yshift=-1cm]
			\PlanePartition{{4,3,3,1},{4,2,1,0},{2,0,0,0}}
		\end{scope}
	\end{tikzpicture}
	\caption{\label{fig: plane partition} A plane partition and its graphical representation as a pile of cubes.}
\end{figure}
A plane partition $\pi$ is \emph{totally symmetric} if for every $(i,j,k)$ that is an element of $\pi$, all permutations of the coordinates $(i,j,k)$ are also elements of $\pi$. We denote by $\TSPP_n$ the set of totally symmetric plane partitions (TSPPs) inside an $(n,n,n)$-box.
Given $T=(T_{i,j})_{1 \le i, j \le n-1} \in \TSPP_{n-1}$, we associate with $T$ a modified balanced partition\footnote{This can be regarded as a generalisation of Stanley's \cite{Stanley73} \emph{trace} statistic which is defined for a plane partition $\pi$ as the sum over the parts of its diagonal $\diag(\pi)$.} as follows: Consider the partition  $(T_{1,1},\ldots,T_{n-1,n-1})^\prime =: \diag(T)$, which is just the profile of the diagonal of $T$ in the $y=x$ plane if interpreted as stacks of cubes.  If $\diag(T)$ has Frobenius notation $(a_1,\ldots,a_l|b_1,\ldots,b_l)$, we set $\pi(T):=(a_1,\ldots,a_l|b_1+1,\ldots,b_l+1)$, for an example see Figure \ref{fig: TSPP and mod bal partition}. It can be checked that $\pi(T)$ is a modified balanced partition.\\

\begin{figure}[h]
\centering
\begin{tikzpicture}
\draw (0,-1.75) node {$T$};
\begin{scope}[scale=0.4]
\PlanePartition{{4,4,4,3},{4,3,2,1},{4,2,1,1},{3,1,1,0}}
\end{scope}
\begin{scope}[scale=0.5, xshift=7cm, yshift=-3cm]
\draw (2.5,-.5) node {$\diag(T)$};
\Tableaux{{3,2,2,1}}
\end{scope}
\begin{scope}[scale=0.5, xshift=14cm, yshift=-3cm]
\draw (2.75,-.5) node {$\pi(T)$};
\Tableaux{{3,2,2,2,1}}
\end{scope}
\end{tikzpicture}
\caption{\label{fig: TSPP and mod bal partition} A TSPP $T$ inside a $(4,4,4)$-box, its diagonal $\diag(T)$ and its associated modified balanced partition $\pi(T)$ of size $5$.}
\end{figure}
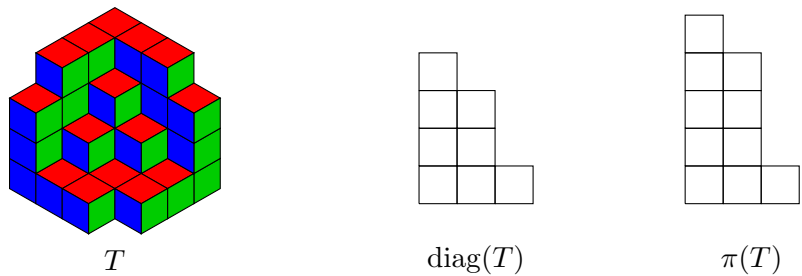

We count TSPPs $T$ with fixed $\pi(T)$. 

\begin{prop}
\label{prop: number of TSPP with given diagonal}
Let $\lambda=(a_1,\ldots, a_l|b_1,\ldots, b_l)$ be a modified balanced partition. The number of TSPPs $T$ with $\pi(T)=\lambda$ is equal to
$
\det_{1 \leq i,j \leq l} \left( \binom{b_j-1}{a_i} \right).
$
\end{prop}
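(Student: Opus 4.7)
The plan is to prove the formula by applying the Lindström--Gessel--Viennot (LGV) lemma to an explicit non-intersecting lattice path (NILP) model of TSPPs with a prescribed diagonal.

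First, I would use the fact that a TSPP $T\in\TSPP_{n-1}$ is an $S_3$-invariant order ideal of $[n-1]^3$, and that such ideals are in bijection with arbitrary order ideals of the tetrahedral subposet $R=\{(i,j,k):1\le i\le j\le k\le n-1\}$ (with componentwise order); this uses the elementary lemma that if $(x_1,x_2,x_3)\le(y_1,y_2,y_3)$ componentwise, then their sorted tuples are also componentwise ordered, so that $S_3\cdot S$ remains a down-set in $[n-1]^3$ for any down-set $S$ of $R$. Fixing $\pi(T)=(a_1,\ldots,a_l\mid b_1+1,\ldots,b_l+1)$ is equivalent to fixing the values along the diagonal edge $\{(i,i,k)\}$ of $R$: the Durfee square of $\diag(T)$ has size $l$, with $T_{i,i}=b_i+i$ for $i\le l$ and $T_{i,i}<i$ for $i>l$.

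Next, I would construct a bijection between the set of such TSPPs and $l$-tuples $(P_1,\ldots,P_l)$ of NILPs in $\mathbb{Z}^2$ with unit east and north steps, where $P_i$ runs from $A_i:=(-a_i,a_i)$ to $E_i:=(0,b_i-1)$. The path $P_i$ records the profile of $T$ around the $i$-th cell of the Durfee square of $\diag(T)$, read off within one sextant of an $S_3$-fundamental domain of $R$; total symmetry reconstructs $T$ in the remaining five sextants, and the down-set property in $R$ becomes precisely the non-intersection property of $(P_1,\ldots,P_l)$. The number of unrestricted east--north paths from $A_i$ to $E_j$ is $\binom{b_j-1}{a_i}$ (with the convention that this is $0$ when $a_i\ge b_j$), since such a path consists of $a_i$ east and $b_j-1-a_i$ north steps. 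The sources lie in the natural order along the antidiagonal $x+y=0$ and the sinks lie in the natural order along the $y$-axis, so the only pairing admitting non-crossing matchings is the identity. The LGV lemma then yields
\[
\#\{T\in\TSPP_{n-1}:\pi(T)=\lambda\}=\det_{1\le i,j\le l}\binom{b_j-1}{a_i},
\]
as claimed.

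The main obstacle is the middle step: exhibiting the explicit bijection between TSPPs with a prescribed diagonal and the $l$-tuples of paths, and checking that the down-set axiom in $R$ matches the non-intersection condition. Once this correspondence is in place, the binomial count for single paths and the LGV application are both routine.
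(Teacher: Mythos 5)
Your proposal is correct and follows essentially the same route as the paper: reduce to a fundamental domain of the $S_3$-symmetry, encode the TSPP with prescribed diagonal as a family of non-intersecting lattice paths with $\binom{b_j-1}{a_i}$ choices for the path from the $i$-th source to the $j$-th sink, and apply Lindstr\"om--Gessel--Viennot. The paper phrases the fundamental domain in terms of symmetric lozenge tilings of a hexagon (a sixth of the tiling) rather than order ideals of the tetrahedral poset, and, like you, only sketches the bijection to the path families.
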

\begin{proof}
This is a classical application of the Lindstr\"om-Gessel-Viennot theorem~\cite{GesselViennot85,Lindstroem73}, see also \cite{Stembridge95}. 
We sketch the proof on the example in Figure~\ref{fig:LGV}.  

TSPPs of order $n$ clearly correspond to lozenge tilings of a regular hexagon with side lengths $n$ that are symmetric with respect to the vertical symmetry axis as well as rotation of $120^\circ$. By this symmetry, it suffices to know a sixth of the lozenge tiling. In our example, we choose the sixth that is in the wedge of the red dotted rays.
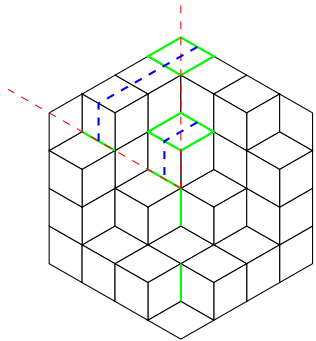
\begin{figure}[h]
\centering
\begin{tikzpicture}
\begin{scope}[scale=0.5]
\PlanePartitionWhite{{4,4,4,3},{4,3,2,1},{4,2,1,1},{3,1,1,0}}
\RightBoundary{0}{-4}{4}{3}
\LeftBoundary{4}{0}{4}{3}
\TopBoundary{4}{-4}{0}{3}
\draw [color=green, thick] (0,1) -- ({1*cos(30)},{1+1*sin(30)}) -- (0,2) -- ({-1*cos(30)},{1+1*sin(30)}) -- (0,1);
\draw [color=green, thick] (0,3) -- ({1*cos(30)},{3+1*sin(30)}) -- (0,4) -- ({-1*cos(30)},{3+1*sin(30)}) -- (0,3);
\draw [color=green, thick] (0,-1) -- (0,0);
\draw [color=green, thick] (0,-3) -- (0,-2);
\draw [color=green, thick] (0,0) -- ({-1*cos(30)},{1*sin(30)});
\draw [color=green, thick] ({-3*cos(30)},{3*sin(30)}) -- ({-2*cos(30)},{2*sin(30)});
\draw [dashed, red] (0,0) -- (0,5);
\draw [dashed, red] (0,0) -- ({-5.25*cos(30)},{5.25*sin(30)});
\draw [dashed, blue, thick] ({1/2*cos(30)},{1+3/2*sin(30)}) -- ({-1/2*cos(30)},{1+1/2*sin(30)})  -- ({-1/2*cos(30)},{1/2*sin(30)});
\draw [dashed, blue, thick] ({1/2*cos(30)},{1+11/2*sin(30)}) -- ({-5/2*cos(30)},{1+5/2*sin(30)}) -- ({-5/2*cos(30)},{5/2*sin(30)});
\end{scope}
\end{tikzpicture}
\caption{\label{fig:LGV} Running example in the proof of Proposition~\ref{prop: number of TSPP with given diagonal}.}
\end{figure} 

\noindent
Now observe that the positions of the horizontal lozenges in the upper half of the vertical symmetry axis are prescribed by the $b_i$'s, while the positions of the vertical segments in the lower part of the vertical symmetry axis are prescribed by the $a_i$'s. Both are indicated in green in Figure~\ref{fig:LGV}. By the cyclic symmetry, these green segments have corresponding segments on the red dotted ray that is not contained on the vertical symmetry axis, again indicated in green in the figure. Now the lozenge tiling is determined by the family of non-intersecting lattice paths that connect these segments with the horizontal lozenges in the upper half of the vertical symmetry axis, indicated in blue in the figure.
\end{proof}

The third objects of importance are alternating sign matrices.

\begin{defi}
An \emph{alternating sign matrix}, or \emph{ASM} for short, of size $n$ is an $n \times n$ matrix with entries $-1,0,1$ such that all row- and column-sums are equal to $1$ and in all rows and columns the non-zero entries alternate.
\end{defi}

It is easy to see that every ASM has a unique $1$ in its top row. A product formula for the refined enumeration of ASMs with respect to the position of the unique $1$ in the top row was conjectured by Mills, Robbins, Rumsey \cite{MillsRobbinsRumsey83} and first proven by Zeilberger \cite{Zeilberger96b}. Following the convention of \cite{Fischer18}, we define the \emph{inversion number} and \emph{complementary inversion number} of an ASM $A=(a_{i,j})_{1 \leq i,j \leq n}$ of size $n$ as
$$
\inv(A):= \sum_{1 \leq i^\prime < i \leq n \atop 1 \leq j^\prime \leq j \leq n}a_{i^\prime, j}a_{i,j^\prime} \quad \text{and} \quad
\inv^\prime(A):= \sum_{1 \leq i^\prime < i \leq n \atop 1 \leq j \leq j^\prime \leq n}a_{i^\prime, j}a_{i,j^\prime},
$$
and denote by $\mathcal{N}(A)$ the number of $-1$'s of $A$.
For instance,
\[
A= \begin{pmatrix}
0 & 1 & 0 & 0\\
1 & -1 & 0 & 1 \\
0 & 0 & 1 & 0 \\
0 & 1 & 0 & 0
\end{pmatrix}
\]
is an ASM of size $4$ with $(\mathcal{N}(A),\inv(A),\inv^\prime(A))=(1,3,2)$. 
The number of $-1$ entries, the inversion number and the complementary inversion number of an ASM $A$ of size $n$ are connected by
$
\mathcal{N}(A)+\inv(A)+\inv^\prime(A)=\binom{n}{2}.
$
The weighted enumeration of ASMs, where each ASM $A$ is weighted by $t^{\mathcal{N}(A)}$, is called the $t$-enumeration. For $t \in \{0,1,2,3\}$, it turns out that the $t$-enumeration is given by explicit product formulas, see for example \cite{Aigner_Det,Kuperberg96, MillsRobbinsRumsey83}.

\section{An antisymmetrizer-to-determinant lemma}  \label{sect:lem}

The following lemma will be a fundamental tool for the proof of Theorem~\ref{thm: main result}. More applications of it will appear in a forthcoming paper.

\begin{lem}  
\label{general}
Let $f(X),g(X)$ be Laurent series over $\mathbb{C}$ such that for every non-zero polynomial $p(X) \in \mathbb{C}[X]$ of degree no greater than $n$, we have $p(f(X)) \not= p(g(X))$. Then 
$$
\det_{1 \le i, j \le n} \left( f(X_i)^j - g(X_i)^j \right) 
 = \asym_{X_1,\ldots,X_n} \left[  \prod_{1 \le i \le j \le n} (f(X_j)-g(X_i))     \right].$$
\end{lem}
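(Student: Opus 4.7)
I will proceed by induction on $n$, showing that both sides of the identity satisfy the same recursion. To ease notation, write $f_i:=f(X_i)$, $g_i:=g(X_i)$, let $D_n$ and $R_n$ denote respectively the left- and right-hand sides of the claimed equation, and write $D_{n-1}^{(k)}, R_{n-1}^{(k)}$ for the analogous quantities built from the $2(n-1)$ variables indexed by $\{1,\ldots,n\}\setminus\{k\}$. The base case $n=1$ is immediate since both sides reduce to $f_1-g_1$.

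For the recursion on $R_n$, I would split off the factors involving $g_1$:
\[
\prod_{1\le i\le j\le n}(f_j-g_i)=\prod_{j=1}^{n}(f_j-g_1)\cdot\prod_{2\le i\le j\le n}(f_j-g_i),
\]
and decompose the antisymmetrizer according to the value $k=\sigma(1)$. For any $\sigma$ with $\sigma(1)=k$, the first factor becomes $\prod_{m=1}^{n}(f_m-g_k)$, independent of $\sigma$ since $\sigma$ is a bijection, while the restriction $\sigma|_{\{2,\ldots,n\}}$ ranges over bijections onto $\{1,\ldots,n\}\setminus\{k\}$, so summing the second factor reconstructs $R_{n-1}^{(k)}$. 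Counting inversions involving position $1$ yields $\sgn(\sigma)=(-1)^{k-1}\sgn(\sigma|_{\{2,\ldots,n\}})$, and collecting these pieces gives
\[
R_n=\sum_{k=1}^{n}(-1)^{k-1}\prod_{j=1}^{n}(f_j-g_k)\,R_{n-1}^{(k)}.
\]

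The main step is establishing the matching recursion for $D_n$. I would introduce the auxiliary $n\times n$ matrix $\widetilde{C}$ whose first $n-1$ columns agree with those of $A=(f_i^j-g_i^j)$ and whose last column is $\bigl(\prod_{j=1}^{n}(f_j-g_i)\bigr)_{i=1}^{n}$, and evaluate $\det(\widetilde{C})$ in two ways. Laplace expansion along the last column gives $\det(\widetilde{C})=\sum_{k=1}^{n}(-1)^{k+n}\prod_{j=1}^{n}(f_j-g_k)\,D_{n-1}^{(k)}$. On the other hand, expanding $\prod_{k=1}^{n}(f_k-X)=\sum_{\ell=0}^{n}(-1)^\ell e_{n-\ell}(f_1,\ldots,f_n)\,X^\ell$ and setting $X=f_i$ (which forces the sum to vanish, since $f_i$ is a root of the left-hand side), then subtracting from the $X=g_i$ expansion, yields the identity
\[
\prod_{j=1}^{n}(f_j-g_i)=(-1)^{n-1}(f_i^n-g_i^n)+\sum_{\ell=1}^{n-1}(-1)^{\ell-1}e_{n-\ell}(f_1,\ldots,f_n)(f_i^\ell-g_i^\ell).
\]
This exhibits the last column of $\widetilde{C}$ as $(-1)^{n-1}$ times the last column of $A$ plus a linear combination of the remaining columns of $A$ with coefficients that do not depend on $i$. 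Multilinearity of the determinant then yields $\det(\widetilde{C})=(-1)^{n-1}D_n$; equating the two evaluations and simplifying signs produces $D_n=\sum_{k=1}^{n}(-1)^{k-1}\prod_{j=1}^{n}(f_j-g_k)\,D_{n-1}^{(k)}$.

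With the matching recursions in hand the induction closes: assuming $D_{n-1}^{(k)}=R_{n-1}^{(k)}$ for every $k$, substitution into the recursion for $D_n$ immediately yields $D_n=R_n$. I expect the main obstacle to be recognizing the algebraic identity that expresses $\prod_j(f_j-g_i)$ in the basis $\{f_i^{\ell}-g_i^{\ell}\}_{\ell=1}^{n}$; once this is found, the rest reduces to careful sign bookkeeping. The non-degeneracy hypothesis $p(f(X))\ne p(g(X))$ does not appear to be needed for this argument, since the claimed equation is a polynomial identity in the $2n$ quantities $f_i,g_i$ that may be treated as free variables and then specialized.
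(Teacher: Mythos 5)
Your proof is correct, and its skeleton is the same as the paper's: induction on $n$, the identical recursion $\sum_{k}(-1)^{k-1}\prod_{j}(f_j-g_k)\,(\cdot)_{n-1}^{(k)}$ for both sides, and the same elementary-symmetric-function identity expressing $\prod_{j}(f_j-g_i)$ in terms of the powers $f_i^{\ell}-g_i^{\ell}$ (the paper writes it with $e_{n-j}(-f_1,\ldots,-f_n)$, which matches your signed version). The one genuine difference is how you extract the recursion for the determinant: the paper sets up the linear system $\sum_j (f_i^j-g_i^j)c_j=(-1)^{n-1}\prod_k(f_k-g_i)$, invokes the non-degeneracy hypothesis to guarantee the system is non-singular, and applies Cramer's rule to the unknown $c_n=e_0=1$; you instead perform column operations on the augmented matrix $\widetilde{C}$ and equate the two evaluations of $\det(\widetilde{C})$, which avoids any division. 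This buys you something real: as you observe, your argument is a polynomial identity in the $2n$ indeterminates $f_i,g_i$ (the antisymmetrizer permutes the pairs $(f_i,g_i)$ diagonally, so both sides are indeed polynomials in these quantities), and hence the hypothesis that $p(f(X))\neq p(g(X))$ for nonzero $p$ of degree $\le n$ is not needed for the identity itself -- it is an artifact of the Cramer's-rule route. You also supply an explicit derivation of the recursion for the antisymmetrized product (via the decomposition by $\sigma(1)=k$), which the paper merely asserts. All signs check out on both sides.
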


\begin{proof}  The proof is by induction with respect to $n$. The result is obvious for $n=1$. 
Let $L_n(X_1,\ldots,X_n), R_n(X_1,\ldots,X_n)$ denote the left and right hand side of the identity in the statement, respectively. By the induction hypothesis, we can assume 
$ L_{n-1}(X_1,\ldots,X_{n-1}) \allowbreak = R_{n-1}(X_1,\ldots,X_{n-1})$.
We show that both $L_n(X_1,\ldots,X_n)$ and $R_n(X_1,\ldots,X_n)$ can be computed recursively using 
$L_{n-1}(X_1,\ldots,X_{n-1})$ and $R_{n-1}(X_1,\ldots,X_{n-1})$, respectively, with the same recursion.
For the right hand side, we have 
$$
R_n(X_1,\ldots,X_n) 
=  \sum_{i=1}^{n} (-1)^{i+1}  \left( \prod_{k=1}^n (f(X_k)-g(X_i))  \right) R_{n-1}(X_1,\ldots,\widehat{X_i},\ldots,X_n),
$$
where $\widehat{X_i}$ means that $X_i$ is omitted.
For the left hand side, we first observe
\begin{equation}
\label{fundamentalidentity}
\sum_{j=0}^{n} (f(X_i)^j - g(X_i)^j) e_{n-j}(-f(X_1),\ldots,-f(X_n)) = (-1)^{n-1} \prod_{k=1}^{n} (f(X_k)-g(X_i)),
\end{equation}
where $e_{j}(X_1,\ldots,X_n)$ denotes the $j$-th elementary symmetric function. Note that the summand for $j=0$ on the left hand side is actually $0$. 
Now consider the following system of linear equations with $n$ unknowns $c_j(X_1,\ldots,X_n)$, $1 \le j \le n$, and 
$n$ equations.
$$
\sum_{j=1}^{n} (f(X_i)^j - g(X_i)^j)  c_j(X_1,\ldots,X_n) 
= (-1)^{n-1} \prod_{k=1}^{n} (f(X_k)-g(X_i)), \quad 1 \le i \le n.
$$
The determinant of this system of equations is obviously $L_n(X_1,\ldots,X_n)$, which is non-zero by the assumption. By \eqref{fundamentalidentity}, we know that the unique solution of this system is given by 
$
c_j(X_1,\ldots,X_n) = e_{n-j}(-f(X_1),\ldots,-f(X_n)).
$
On the other hand, by Cramer's rule, 
$$
c_n(X_1,\ldots,X_n) = \frac{\det \limits_{1 \le i, j \le n} \left( \begin{cases}  f(X_i)^j - g(X_i)^j, & \text{if $j<n$} \\
  (-1)^{n-1} \prod\limits_{k=1}^{n} (f(X_k)-g(X_i)) , & \text{if $j=n$}  \end{cases} \right)}{L_n(X_1,\ldots,X_n)}.
$$
The assertion now follows from $c_n(X_1,\ldots,X_n) = e_{0}(X_1,\ldots,X_n)=1$ and expanding the determinant in the numerator with respect to the last column.
\end{proof}

In order to apply the lemma to $\A_n(u,v;\mathbf{x})$, we first observe that it is equal to
$$
\prod\limits_{i=1}^n \left( \frac{ x_i^{n-1}}{\frac{v}{x_i} + (1-u-v) + ux_i} \right)
   \frac{ \asym_{x_1,\ldots,x_n} 
\left[  \prod\limits_{1 \le i \le  j \le n} \left(\frac{v}{x_i} + (1-u-v) + ux_j\right) \right]}{ \prod\limits_{1 \le i < j \le n} (x_j - x_i)}
$$
By the lemma, this is further equal to 
$\frac{ \det\limits_{1 \le i, j \le n} \left( x_i^{n-j} p_j(x_i) \right)}{\prod\limits_{1 \le i < j \le n} (x_i - x_j)}$
with $p_{j}(x):=  \sum\limits_{k=0}^{j-1} x^k (-1+u+v-ux)^k v^{j-1-k}$.

\section{Proof of Theorem~\ref{thm: main result}} \label{sect:proof}
The proof of Theorem~\ref{thm: main result} is split into two parts. First, we derive an explicit expansion of $A_n(u,v;\mathbf{x})$ into Schur polynomials. Second we prove that the coefficients of each Schur polynomial satisfy the same recursion as the right hand side of \eqref{eq: A-poly as sum of TSPPs}.

To emphasise the general principle used to express the determinantal expression of $\A_n(u,v;\mathbf{x})$ as a sum of Schur polynomials, we consider $p_j(x)$ to be a family of polynomials $p_j(x):= \sum_{k\geq 0} a_{j,k} x^k$.
Using the linearity of the determinant in the columns, we have
\begin{equation}
\label{eq: det to Schur I}
\frac{\det\limits_{1 \leq i,j \leq n}\left( x_i^{n-j}p_j(x_i) \right)}{\prod\limits_{1 \leq i< j \leq n}(x_i-x_j)}
= \sum\limits_{k_1,\ldots,k_n \geq 0} \left( \prod_{j=1}^n a_{j,k_j} \right) s_{(k_1,\ldots, k_n)}(\mathbf{x}),
\end{equation}
where we used in the last step the well known extension of Schur polynomials to arbitrary sequences $L=(L_1,\ldots,L_n)$ of non-negative integers via
$
s_{L}(\mathbf{x}) := \frac{\det\limits_{1 \leq i,j \leq n}\left( x_i^{L_j+n-j}\right)}{\prod\limits_{1 \leq i < j \leq n}(x_i-x_j)}.
$
It can be checked that the generalised Schur polynomial $s_L(\mathbf{x})$ is either equal to $0$ or $s_L(\mathbf{x})= \sgn(\sigma) s_\lambda(\mathbf{x})$ where $\lambda=(\lambda_1,\ldots, \lambda_n)$ is a partition whose parts are allowed to be zero and $\sigma \in S_n$ is a permutation such that $L_j=\lambda_{\sigma(j)}+j-\sigma(j)$ for all $1 \leq j \leq n$. It follows that \eqref{eq: det to Schur I} is equal to
\begin{equation}
\label{eq: det to Schur II}
 \sum_{\lambda} s_\lambda(\mathbf{x}) \left( \sum_{\sigma \in S_n} \sgn(\sigma) \prod_{j=1}^n a_{j,\lambda_{\sigma(j)}+j-\sigma(j)}  \right)
= \sum_{\lambda} s_\lambda(\mathbf{x}) \det\limits_{1 \leq i,j \leq n} \left( a_{j,\lambda_i+j-i} \right),
\end{equation}
where the sum is over all partitions $\lambda$.
By applying \eqref{eq: det to Schur II} to the family of polynomials 
$$
p_{j}(x) 
= \sum\limits_{0 \leq l, k \leq j-1}
(-1)^{k}\binom{k}{l} x^{k+l} u^{l}(1-u-v)^{k-l}v^{j-1-k},
$$
we obtain

\begin{multline*}
\A_n(u,v;\mathbf{x})=
\sum_\lambda s_\lambda(\mathbf{x}) \det\limits_{1 \leq i,j \leq n} \left( \sum\limits_{0 \leq l, k \leq j-1 \atop k+l = \lambda_i+j-i}
(-1)^{k}\binom{k}{l} u^{l}(1-u-v)^{k-l}v^{j-1-k} \right)\\
=\sum_\lambda s_\lambda(\mathbf{x})\\ \times \det_{1 \leq i,j \leq n}\left( \sum_{k=0}^{j-1}
(-1)^{k}\binom{k}{\lambda_i+j-i-k}
u^{\lambda_i+j-i-k}(1-u-v)^{2k-\lambda_i-j+i}v^{j-1-k}
\right).
\end{multline*}
We denote by $m_{i,j}(\lambda_i)$ the $(i,j)$-th entry of the matrix in the above determinant.
An entry $m_{i,1}(\lambda_i)= \binom{0}{\lambda_i+1-i}u^{\lambda_i+1-i}(1-u-v)^{-\lambda_i-1+i}$ in the first column is $1$ iff $\lambda_i=i-1$ and $0$ otherwise. Let $l$ be the side length of the Durfee square of $\lambda$. The only possible part of $\lambda$ satisfying $\lambda_i=i-1$ is the $(l+1)$-st. For $\lambda_{l+1} \neq l$ the partition $\lambda$ is not a modified balanced partition and the above determinant is $0$. Hence we assume for the rest of the proof $\lambda_{l+1}=l$. By expanding the determinant along the first column, we obtain
\[
\det_{1 \leq i,j \leq n} \left(m_{i,j}(\lambda_i) \right) = (-1)^{l+2}\det_{1 \leq i,j \leq n-1} \left(m_{i,j}^\prime(\lambda_i) \right),
\]
where $(m_{i,j}^\prime)_{1,\leq,i,j \leq n-1}$ denotes the matrix obtained by deleting the first column and the $(l+1)$-st row of $(m_{i,j}(\lambda_i))_{1\leq i,j \leq n}$. For $1 \leq i \leq l$, i.e., $\lambda_{i}>i$, we can rewrite $m_{i,j}^\prime$ as
\begin{multline*}
m_{i,j}^\prime =\\ \sum_{k=0}^{j}(-1)^{k}\binom{k}{\lambda_i+(j+1)-i-k}u^{\lambda_i+(j+1)-i-k}(1-u-v)^{2k-\lambda_i-(j+1)+i}v^{(j+1)-1-k}\\
= \sum_{k=0}^{j-1}(-1)^{k+1}u^{\lambda_i+j-i-k}(1-u-v)^{2k+1-\lambda_i-j+i}v^{j-k-1}\\
\times \left( \binom{k}{\lambda_i+j-i-k}+ \binom{k}{\lambda_i+j-i-k-1} \right) \\
=-(1-u-v) m_{i,j}(\lambda_i)-u m_{i,j}(\lambda_i-1).
\end{multline*}
For $i>l$ on the other hand, i.e. $\lambda_{i+1}<i+1$, we can express $m_{i,j}^\prime$ analogously as
\begin{multline*}
m_{i,j}^\prime = \sum_{k=0}^{j}(-1)^{k}\binom{k}{\lambda_{i+1}+(j+1)-(i+1)-k}\\
\times u^{\lambda_{i+1}+(j+1)-(i+1)-k}(1-u-v)^{2k-\lambda_{i+1}-(j+1)+(i+1)}v^{(j+1)-1-k} 
=v m_{i,j}(\lambda_{i+1}),
\end{multline*}
since $\binom{j}{\lambda_{i+1}-i}=0$.
The coefficient $c_{\lambda}$ of $s_\lambda(\mathbf{x})$ in $\A_{n}(u,v;\mathbf{x})$ is therefore given by
\begin{multline*}
(-1)^{l}\det_{1 \leq i,j \leq n-1} \left( 
\begin{cases}
-(1-u-v) m_{i,j}(\lambda_i)- u m_{i,j}(\lambda_i-1) \quad & i \leq l,\\
v m_{i,j}(\lambda_{i+1}) & i>l,
\end{cases} \right) \\
  = \sum_{(f_1,\ldots,f_l) \in \{0,1\}^l} \left(u^{\sum_{i=1}^l f_i} (1-u-v)^{l-\sum_{i=1}^l f_i} v^{n-1-l}\right)
  c_{(\lambda_1 -f_1,\ldots,\lambda_l-f_l,\lambda_{l+2},\ldots,\lambda_n)},
\end{multline*}
with $c_{(\lambda_1 -f_1,\ldots,\lambda_l-f_l,\lambda_{l+2},\ldots,\lambda_n)}=0$ if $(\lambda_1 -f_1,\ldots,\lambda_l-f_l,\lambda_{l+2},\ldots,\lambda_n)$ is not a partition, where the equality follows from the linearity of the determinant in the rows and choosing $f_i=0$ iff we select the first term in row $i$.
Using Frobenius notation for $\lambda=(a_1,\ldots,a_l|b_1,\ldots,b_l)$, the above recursion can be rewritten as
\begin{multline*}
c_{(a_1,\ldots,a_l|b_1,\ldots,b_l)} =\\ \sum_{(f_1,\ldots,f_l) \in \{0,1\}^l}\left(u^{\sum_{i=1}^l f_i} (1-u-v)^{l-\sum_{i=1}^l f_i} v^{n-1-l}\right) c_{(a_1-f_1,\ldots,a_l-f_l|b_1-1,\ldots,b_l-1)},
\end{multline*}
where $c_{(a_1,\ldots,a_{l-1},-1|b_1,\ldots,b_{l-1},0)}$ is defined as $c_{(a_1,\ldots,a_{l-1}|b_1,\ldots,b_{l-1})}$ .

Denote by $d_\lambda$  the coefficient of $s_\lambda(\mathbf{x})$ in $\sum_{T \in \TSPP_{n-1}} \omega_{\pi(T)}(u,v) s_{\pi(T)}(\mathbf{x})$. For $\lambda=(a_1,\ldots,a_l|b_1,\ldots,b_l)$, Proposition \ref{prop: number of TSPP with given diagonal} implies 
\begin{multline*}
d_{(a_1,\ldots,a_l|b_1,\ldots,b_l)} =\\ u^{\sum_{i=1}^l (a_i+1)} (1-u-v)^{\sum_{i=1}^l (b_i-1-a_1)} v^{\binom{n}{2}-\sum_{i=1}^lb_i} \det_{1 \leq i,j \leq l} \left( \binom{b_j-1}{a_i} \right)\\
= u^{\sum_{i=1}^l (a_i+1)} (1-u-v)^{\sum_{i=1}^l (b_i-1-a_1)} v^{\binom{n}{2}-\sum_{i=1}^lb_i} \det_{1 \leq i,j \leq l} \left( \binom{b_j-2}{a_i}+\binom{b_j-2}{a_i-1} \right)\\
= \sum_{(f_1,\ldots,f_l) \in \{0,1\}^l} \left(u^{\sum_{i=1}^l f_i} (1-u-v)^{l-\sum_{i=1}^l f_i} v^{n-1-l} \right) d_{(a_1-f_1,\ldots,a_l-f_l|b_1-1,\ldots,b_l-1)},
\end{multline*}
where we used the linearity of the determinant in the last step. The assertion follows by induction on $n$ since both $c_\lambda$ and $d_\lambda$ satisfy the same recursion and the induction base can be checked easily by hand.

\section{Final remarks}
We conclude our article with some brief remarks describing further directions that we intend to pursue.
Drawing upon work of Fischer and Riegler \cite{FischerRiegler15} inspired by counting monotone triangles with a fixed bottom row,
one can obtain a family of symmetric polynomials indexed by a partition $\lambda$ that contains $\mathcal{A}_n(u,v;\mathbf{x})$ as a special case of the empty partition.
Data reveal that this family still has a nice Schur expansion in the sense that the coefficients can be written as polynomials in $u,v,1-u-v$ with positive coefficients.
A next natural step would be to generalize our techniques to a broader framework.

Continuing in this direction, one may consider stable limits of the aforementioned symmetric polynomials to obtain (inhomogeneous) elements in the ring of symmetric functions. The lowest degree symmetric function in these expressions is the Schur function $s_{\lambda}$. This is very reminiscent of the stable Grothendieck polynomials of Fomin-Kirillov, and raises the question whether there is a combinatorially interesting algebra structure on these functions.

Finally, it is interesting to note that  $\A_n(u,v;\mathbf{x})$ already appeared in a six-vertex model context. More concretely,  
$X_n(x,y;z_1,\ldots,z_n)$ as defined by Behrend in \cite[Eq. (70)]{Behrend13} seems to satisfy $v^{\binom{n}{2}} X_n(\frac{u}{v},\frac{1}{v};\mathbf{x}) = \A_n(u,v;\mathbf{x})$. We plan to examine this connection in more detail in the full version of this extended abstract.

\section*{Acknowledgements}
The authors want to thank Fran\c{c}ois Bergeron for helpful discussions as well as two anonymous referees for helpful suggestions.

\bibliographystyle{abbrv}
\bibliography{SYM_ASM_arxiv}

\end{document}